\newtheorem{theorem}{Theorem}[section]
\newtheorem{lemma}[theorem]{Lemma}
\def\irr#1{{\rm  Irr}(#1)}
\def\cd#1{{\rm  cd}(#1)}
\def\phi{{\varphi}}
\begin{document}
\title[Derived length $4$ and $4$ character degrees]{Constructing solvable groups with derived length four and four character degrees}
\author[Mark L. Lewis]{Mark L. Lewis}

\address{Department of Mathematical Sciences, Kent State University, Kent, OH 44242}
\email{lewis@math.kent.edu}

\subjclass[2010]{ primary: 20C15 secondary: 20D10}
\keywords{solvable groups, character degrees, derived length}

\begin{abstract}
In this paper, we present a new method to construct solvable groups with derived length four and four character degrees.  We then use this method to present a number of new families of groups with derived length four and four character degrees.
\end{abstract}

\maketitle

\section{Introduction}

Throughout this paper, all groups will be finite and solvable.  The Taketa problem or alternatively, the Isaacs-Seitz conjecture, conjectures when $G$ is solvable that the derived length of $G$ is less than or equal to the number of character degrees.  In general, this conjecture is still open, but it was settled when $G$ has four character degrees by Garrison in his dissertation \cite{Garrison}.  In this paper, we are interested in solvable groups with exactly four character degrees and derived length four.

In Section 2 of \cite{4dlcd}, we listed all of the solvable groups with four character degrees and derived length four that we knew at that time.  So far as we know, no other examples have been published since that time.  All of the examples in Section 2 of \cite{4dlcd} have Fitting height $3$.  Also, none of the examples in that list have a nonabelian normal Sylow $p$-subgroup for some prime $p$.  And every example in that list has at least one character degree that is a prime.

In this paper, we provide a way to construct a group $G$ that has a nonabelian normal Sylow $p$-subgroup and has ${\rm dl} (G) = 4$ and $|\cd G| = 4$.  (We will use ${\rm dl} (G)$ to denote the derived length of $G$ and $\cd G = \{ \chi (1) \mid \chi \in \irr G \}$ for the set of character degrees of $G$.)  We will use this construction to produce the following:

\begin{theorem}\label{fitting two}
There exists a group $G$ with ${\rm dl} (G) = 4$, $|\cd G| = 4$, and Fitting height $2$.
\end{theorem}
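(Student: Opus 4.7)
The plan is to exhibit an explicit example using the construction method introduced in the paper. All previously known examples with $\mathrm{dl}(G)=4$ and $|\cd G|=4$ have Fitting height $3$ and arise from a nilpotent normal subgroup acted on by a non-nilpotent (typically Frobenius-type) complement. The key idea for bringing the Fitting height down to $2$ is instead to take the complement of the nonabelian normal Sylow $p$-subgroup (supplied by the construction) to be nilpotent, so that the quotient $G/F(G)$ is automatically nilpotent.

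Concretely, I would form $G = P \rtimes H$, where $P$ is the nonabelian normal $p$-subgroup produced by the construction and $H$ is an abelian (hence nilpotent) $p'$-group acting on $P$. Since $P$ is normal nilpotent we have $P \le F(G)$, so $G/F(G)$ is a quotient of $G/P \cong H$ and is therefore nilpotent; the Fitting height of $G$ is thus at most $2$, with equality as soon as $H$ acts non-trivially on $P$, forcing $G$ itself to be non-nilpotent.

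For the derived length, I would choose $P$ with $\mathrm{dl}(P)=3$ and arrange the $H$-action on $P/P'$ so that $G' = P$. Then inductively $G^{(k)} = P^{(k-1)}$ for all $k \ge 1$, giving $\mathrm{dl}(G) = 1 + \mathrm{dl}(P) = 4$. This reduces the derived-length verification to picking a $p$-group $P$ of derived length exactly $3$ on which $H$ acts with enough displacement modulo $P'$.

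The delicate step is verifying $|\cd G| = 4$: this is exactly what the general construction of the paper is designed to control, via a Clifford-theoretic analysis of the orbits of $H$ on $\irr{P}$ and the ramification of characters of $P$ extending to $\mathrm{Stab}_G$. The main obstacle is selecting $P$, $H$, and the action so that these orbit sizes and ramification indices combine to give exactly four distinct degrees, neither collapsing to three nor splitting into five; once such a choice is made, the Fitting-height-$2$ conclusion is immediate from the nilpotency of $H$, and the derived-length conclusion follows from the structural observation above.
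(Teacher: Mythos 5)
There is a genuine gap, and it starts with an internal inconsistency. You propose to take $H$ abelian and $P$ of derived length $3$, but then you say the four-degree verification ``is exactly what the general construction of the paper is designed to control.'' It is not: Lemma \ref{facts} requires $\cd P = \{1,p^\alpha\}$, which by Taketa forces ${\rm dl}(P) \le 2$, and it requires $H$ nonabelian with $\cd H = \{1,a\}$ (the degree $a>1$ must appear in $\cd G$, and the proof of the lemma leans on $H'>1$ at every turn). So a $P$ of derived length $3$ together with an abelian $H$ is not an instance of the construction, and you cannot borrow its conclusion. The idea you are missing is that ``nilpotent'' need not mean ``abelian'': the paper takes $H \cong Q_8$ (nilpotent but nonabelian, with $\cd H = \{1,2\}$) acting on the extraspecial group $P$ of order $p^5$ for $p \equiv 3 \pmod 8$, with $\lambda$ of order $8$ from the field of order $p^2$, $\sigma$ the Frobenius map, and $H = \langle \zeta\lambda^2, \lambda\sigma\rangle$. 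There the derived length $4$ does not come from ${\rm dl}(P)=3$; it comes from the nonabelian $H$ via the chain $G' = PH'$, $G'' \ge [P,H']$, and Isaacs' coprime-action theorem giving $[P,H']'=P'$, whence $G'''=P'>1$. Nilpotency of $H$ then gives Fitting height $2$, exactly as you intended.

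Beyond that, your proposal never exhibits a group. The theorem is an existence statement, and the step you defer --- ``selecting $P$, $H$, and the action so that these orbit sizes and ramification indices combine to give exactly four distinct degrees'' --- is the entire content of the proof. Whether an abelian $H$ acting coprimely on a $p$-group of derived length $3$ can yield exactly four character degrees is not obvious and is not established by anything you wrote; note that ${\rm dl}(P)=3$ already forces $|\cd P|\ge 3$, so you would need very tight control of the $H$-orbits on $\irr P$ to avoid producing five or more degrees. As written, the argument reduces the theorem to an unproved existence claim of comparable difficulty.
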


We will also provide examples where no character degree is a prime (see Theorem \ref{no prime}).

In \cite{4dlcd}, we suggested that it might be possible to classify the solvable groups with four character degrees and derived length four.  The construction in this paper provides a counter position.  In particular, we will use our construction to produce seven different ``families'' of solvable groups with derived length four and four character degrees.  Since Section 2 of \cite{4dlcd} had seven families, one could view that we have doubled the number of families of examples.  On the other, we have in no way exhausted the groups that can produced from our construction.  In particular, all of the families we produce have that the normal Sylow $p$-subgroup is either an extraspecial group or is a Heisenberg group (i.e., a Sylow $p$-subgroup of ${\rm GL}_3 (p^a)$ for some positive integer $a$).  On the other hand, the normal Sylow $p$-subgroup found in the construction is more general, and we would expect that there will be many examples that do not have $P$ as one of these two groups.  We decided that seven families was more than sufficient to make our point.

We would like to thank Ni Du and Thomas Keller for several helpful conversations as we were writing this paper.

\section{Key Lemma}

In this note, we produce new solvable groups having derived length $4$ and four character degrees.

\begin{lemma} \label{facts}
Let $p$ be a prime, and let $P$ be a $p$-group so that $\cd P = \{ 1, p^\alpha \}$ for some positive integer $\alpha$.  Suppose the group $H$ acts via automorphisms on $P$ and that $H$ satisfies the following hypotheses: $p$ does not divide $|H|$ and $\cd H = \{ 1, a \}$ for some positive integer $a$. Let $C = C_H (P')$ and $D = C_P (C)$.  Assume one of the following:
\begin{enumerate}
\item $C = 1$ and $H$ acts Frobeniusly on $P$.
\item $C > 1$ is abelian, $D < P$, $H$ acts Frobeniusly on $P/D$, every nonlinear character in $\irr P$ is fully ramified with respect to $P/D$, and $H/C$ acts Frobeniusly on $D$.  When $P' < D$, assume that $|H:C| = a$.
\end{enumerate}
If $G = P \rtimes H$, then ${\rm dl} (G) = 4$ and $\cd G = \{ 1, a, |H|, |H:C| p^\alpha \}$.
\end{lemma}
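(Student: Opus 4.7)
The plan is to compute $\cd G$ via Clifford theory with respect to the normal subgroup $P$ and then pin down ${\rm dl}(G)$ by combining Taketa's theorem with an explicit computation of the derived series. Because $\gcd(|H|,|P|)=1$, every $\theta \in \irr P$ extends to its full inertia subgroup $I_G(\theta) = P \cdot I_H(\theta)$ by Isaacs' coprime extension theorem, and Gallagher's theorem then gives the degrees of $\irr{G \mid \theta}$ as $|H : I_H(\theta)| \cdot \theta(1) \cdot \tau(1)$ for $\tau \in \irr{I_H(\theta)}$. Hence the task reduces to identifying the $H$-orbit stabilizers on $\irr P$.

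For $\theta = 1_P$ the inertia is all of $H$, so $\irr{G \mid 1_P} \cong \irr H$ contributes degrees $\{1, a\}$. For nonlinear $\theta$, in case~(2) full ramification on $P/D$ attaches to $\theta$ a unique $\phi \in \irr D$ with $\theta_D = e\phi$ and $e^2 = |P:D|$; this $\phi$ is nontrivial because its restriction to $P' \leq D$ is nontrivial (else $\theta$ would be linear), so the Frobenius action of $H/C$ on $D$ (and hence on $\irr D$, by the Brauer permutation lemma in coprime action) forces $I_H(\phi) = C$, and thus $I_H(\theta) = C$. Since $C$ is abelian, the only degree obtained over such $\theta$ is $|H:C| \cdot p^\alpha$; case~(1) specializes to $C = 1$, $I_H(\theta) = 1$, yielding the same value $|H| p^\alpha = |H:C| p^\alpha$. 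For a nontrivial $\lambda \in \irr{P/P'}$: if $\lambda_D = 1_D$ then $\lambda$ lifts from $\irr{P/D}$ on which $H$ acts Frobeniusly, so $I_H(\lambda) = 1$ and the induced degree is $|H|$; if $\lambda_D \neq 1_D$ (possible only when $P' < D$), the Frobenius action of $H/C$ on $\irr{D/P'}$ together with the Fitting decomposition $P/P' = [P/P', C] \oplus C_{P/P'}(C)$ forces $I_H(\lambda) = C$, and the induced degree is $|H:C| = a$ by hypothesis. Collecting the cases, $\cd G = \{1, a, |H|, |H:C| p^\alpha\}$.

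For the derived length, Taketa's theorem yields ${\rm dl}(G) \leq |\cd G| = 4$. Conversely, $|\cd G| = 4$ forces $a > 1$, so $H$ is nonabelian with ${\rm dl}(H) = 2$. The hypotheses force $[P, H] = P$: in case~(1) directly from the Frobenius action of $H$ on $P$; in case~(2) from $[P, H] D = P$ (Frobenius of $H$ on $P/D$) combined with $[D, H] = D$, itself a consequence of $C_D(H) = 1$ arising because $H/C$ acts Frobeniusly on $D$ while $C$ centralizes $D$. The same reasoning applied to $H'$ in place of $H$ (using $H' \neq 1$ and that $H'$ inherits the relevant Frobenius actions from $H$) gives $[P, H'] = P$. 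Therefore $G' = P H'$, $G'' = P' \cdot [P, H'] \cdot H'' = P$ (using $H'' = 1$), and $G''' = [P,P] = P' \neq 1$; since $P'$ is abelian, $G^{(4)} = (P')' = 1$, and so ${\rm dl}(G) = 4$.

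The main obstacle is the case-(2), $P' < D$ sub-case of the linear-character analysis: one must show $I_H(\lambda) = C$ exactly, not just $I_H(\lambda) \leq C$, for every linear $\lambda$ with $\lambda_D$ nontrivial. The containment $I_H(\lambda) \leq C$ comes easily from the Frobenius action of $H/C$ on $\irr{D/P'}$, but the reverse inclusion requires the Fitting decomposition of the $C$-module $P/P'$ to identify the $C$-fixed linear characters of $P$ with the dual of $C_{P/P'}(C)$; the otherwise ad hoc hypothesis $|H:C| = a$ in this sub-case is precisely what makes the resulting degree $|H:C|$ coincide with $a$ and merge into $\{1, a, |H|, |H:C| p^\alpha\}$.
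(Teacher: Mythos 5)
Your Clifford-theoretic computation of the degrees and your treatment of case (1) are essentially the paper's, but the derived-length argument has a genuine gap in case (2). You assert that $[D,H']=D$ and hence $[P,H']=P$ because ``$H'$ inherits the relevant Frobenius actions from $H$.'' That inheritance fails precisely when $H' \le C$: since $C$ centralizes $D=C_P(C)$, in that sub-case $[D,H']=1$, and the Frobenius action of $H/C$ on $D$ says nothing about the trivial subgroup $H'C/C$. This sub-case is not exotic --- it occurs whenever $H/C$ is cyclic (e.g.\ $|H:C|$ prime), which is exactly the situation in several of the paper's own families, and there one really does get $[P,H'] < P$ and $G'' < P$. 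All your argument yields is $G''=P'[P,H']$ with $[P,H']D=P$, which leaves open the possibility that $G'''=(P'[P,H'])'=1$, i.e.\ ${\rm dl}(G)=3$. The paper closes this by observing that $H'$ centralizes $D$ and, via the full-ramification hypothesis, fixes every nonlinear irreducible character of $P$, so Isaacs' theorem on coprime actions fixing all nonlinear irreducible characters (Theorem 3.3 of the 1989 paper) gives $[P,H']'=P'$; combined with $[P,H'] \le G'' \le P$ this forces $G'''=P'>1$. Some such input is indispensable here and is entirely missing from your proposal.

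There is also an error in your analysis of linear characters $\lambda$ with $\lambda_D \ne 1_D$ (the $P'<D$ sub-case): the Fitting decomposition $P/P'=[P/P',C]\times C_{P/P'}(C)$ shows that $I_H(\lambda)=C$ only when $\lambda$ is trivial on $[P/P',C]$, i.e.\ only for the unique $C$-invariant extension of $\delta=\lambda_D$; every other extension $\mu\lambda_0$ with $1\ne\lambda_0\in\irr{P/D}$ has trivial stabilizer (a nontrivial element of $C$ fixing it would fix $\lambda_0$, contradicting the Frobenius action of $C$ on $P/D$) and induces irreducibly to degree $|H|$, not $a$. The paper gets the correct dichotomy via Glauberman's lemma and Corollary 13.9 of Isaacs. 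Your blanket claim $I_H(\lambda)=C$ is false, although by luck both resulting degrees $a$ and $|H|$ already appear elsewhere, so the final set $\cd G$ is unchanged; the derived-length gap above is the one that actually undermines the conclusion.
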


\begin{proof}
Suppose first that $H$ acts Frobeniusly on $P$.  Since $H$ is nonabelian, we have $H' > 1$, so $H'$ acts Frobeniusly on $P$.  It follows that $P = [P,H'] \le G'$, so $P = [P,H'] \le [G',G'] = G''$.  On the other hand, since $|\cd {G/P}| = 2$, we have $G'' \le P$; so $G'' = P$.  Because $P'' = 1$, we conclude that ${\rm dl} (G) = 4$ in this case.

Suppose (2).  Since $C$ centralizes $P'$, it follows that $P' \le D$.  This implies that $D$ is normal in $P$.  Also, because $C$ is normal in $H$, we see that $D$ is normalized by $H$, so $D$ is normal in $G$.  Observe that $C$ abelian and $H$ nonabelian implies that $C < H$.  Since $H/C$ acts Frobeniusly on $D$, so $D = [D,H] \le G'$.  The fact that $H$ acts Frobeniusly on $P/D$ implies that $P/D = [P/D,H] = [P,H]D/D \le (G/D)' = G'D/D = G'/D$.  This implies that $P \le G'$.  We now have that $G' = PH'$.  We have $H' > 1$ and $H$ acts Frobeniusly on $P/D$, so $H'$ acts Frobeniusly on $P/D$.  We have two cases to deal with, when $H' \le C$ and when $H' \not\le C$.

Suppose that $H' \le C$.  It follows that $H'$ centralizes $D$.  Since every nonlinear irreducible character of $P$ is fully-ramified with respect to $P/D$, this implies that $H'$ fixes every nonlinear irreducible character of $P$.  Applying Theorem 3.3 of \cite{coprime}, we have $[P,H']' = P'$.  Observe that $[P,H'] \le G'' \le P$.  This implies that $[P,H']' \le G''' \le P'$.  We deduce that $G''' = P'$, and since $P' > 1$ is abelian, we conclude that ${\rm dl} (G) = 4$.

Now, we consider the case where $H' \not\le C$.  In particular, $H' \cap C < H'$.  It follows that $H'/H' \cap C$ acts Frobeniusly on $D$.  Hence, we have $D = [D,H'] \le G'$.  Since $H$ acts Frobeniusly on $P/D$ and $H' > 1$, it follows that $H'$ acts Frobeniusly on $P/D$.  This implies that $P/D = [P/D,H']$, and so, $P = [P,H']D = [P,H'][D,H'] = [P,H']$.  Observe that $[P,H'] \le G'$; thus, $P \le G'$.  We then obtain $P = [P,H'] \le [G',G'] = G'' \le P$, and hence, $G'' = P$.  Because $P' > 1 $ and $P'' = 1$, we conclude that ${\rm dl} (G) = 4$.

We now compute $\cd G$.  We have $\cd {G/P} = \cd H = \{ 1, a \}$.  If $H$ acts Frobeniusly on $P$, then $\theta^G \in \irr G$ for all $\theta \in \irr P$ (see Theorem 6.34 (b) of \cite{text}), and so $\cd G = \{ 1, a, |H|, |H| p^\alpha \}$ which is the desired result since $C = 1$.  Thus, we assume we have hypothesis (2).

Since $H$ acts Frobeniusly on $P/D$, we have if $1 \ne \lambda \in \irr {P/D}$, then $\lambda^G \in \irr G$, and so, $|H| = \lambda^G (1) \in \cd G$.  We deduce that $\cd {G/D} = \{ 1, a, |H| \}$.  When $D = P'$, we have $\cd {G/P'} = \cd {G/D}$.

Suppose that $P' < D$.  Consider $\delta \in \irr {D/P'}$.  Observe that $\delta$ is $C$-invariant, and since $H/C$ acts Frobeniusly on $D$ and thus on $D/P'$, we see that $C$ is the stabilizer of $\delta$ in $H$.  We know that $\delta$ extends to $\irr {P/P'}$.  Note that $C$ acts on the extensions of $\delta$ to $P$ and using Gallagher's theorem (Corollary 6.17 of \cite{text}) $\irr {P/D}$ acts transitively by right multiplication on the extensions of $\delta$ to $P$.  Applying Glauberman's lemma (Lemma 13.8 of \cite{text}), we see that $\delta$ has a $C$-invariant extension $\mu \in \irr {P/P'}$.  Since $C$ acts Frobeniusly on $\irr {P/D}$, we may apply Corollary 13.9 of \cite{text} to see that $\mu$ is the unique $G$-invariant extension of $\delta$ to $\mu$.  Note that $C$ will be the stabilizer of $\mu$ in $H$, and so, $PC$ is the stabilizer of $\mu$ in $G$.  Applying Corollary 6.27 of \cite{text}, $\mu$ extends to $PC$ and by Gallagher's theorem, $\mu$ only has extensions to $PC$ since $C$ is abelian.  This implies that $\cd {G \mid \mu} = \{ |G:PC| \} = \{ |H:C| \} = \{ a \}$ since $a = |H:C|$ in this case.

Note that any extension of $\delta$ to $P$ will have the form $\mu \lambda$ for some character $\lambda \in \irr {P/D}$.  If $1 \ne \lambda$, we see that if $h \in C_H (\mu\lambda)$ then $h$ stabilizes $(\mu\lambda)_D = \delta$, and so, $C_H (\mu \lambda) \le C_H (\delta) = C = C_H (\mu)$, and so, $\mu \lambda = (\mu \lambda)^h = \mu \lambda^h$.  Applying Gallagher's theorem, we have that $h$ stabilizes $\lambda$.  Since $C$ acts Frobeniusly on $\irr {P/D}$ and $\lambda \ne 1$, we conclude that $h =1$.  It follows that $C_H (\mu\lambda) = 1$, and so, $(\mu\lambda)^G \in \irr G$.  We deduce that $\cd {G \mid \delta} = \{ a, |H| \}$.  This yields $\cd {G/P'} = \{ 1, a, |H| \} = \cd {G/D}$ in this case.

Finally, suppose $1 \ne \gamma \in \irr {P'}$, and consider $\hat\gamma \in \irr {D \mid \gamma}$.  Notice that the irreducible constituents of $\hat\gamma^P$ will be nonlinear, so $\hat\gamma$ is fully-ramified with respect to $P/D$, and let $\epsilon \in \irr P$ be the unique irreducible constituent of $(\hat\gamma)^P$.  We know that $\epsilon (1) = p^\alpha$.  It follows that $\epsilon$ and $\hat\gamma$ have the same stabilizer in $G$.  Because $C$ centralizes $D$ and $H/C$ acts Frobeniusly on $D$, it follows that $PC$ will be the stabilizer of $\hat\gamma$ in $G$.  Applying Corollary 6.28 of \cite{text}, we see that $\epsilon$ extends to $PC$, and since $C$ is abelian, we may use Gallagher's theorem to see that $\epsilon$ only has extensions to $PC$.  We obtain $\cd {G \mid \hat\gamma} = \{ |G:PC| \epsilon (1) \} = \{ |H:C| p^\alpha \}$.  We conclude that $\cd G = \{ 1, a, |H|, |H:C| p^\alpha \}$ as desired.
\end{proof}

\section{Specific Families}

We now find specific families of groups that meet the parameters of Lemma \ref{facts}.  We begin with a family of groups based on the Heisenberg group.  In this first example, $G$ will be a Frobenius group where the Frobenius kernel is a Heisenberg group and a Frobenius complement is a nonnilpotent metacyclic group.

\begin{theorem} \label{one}
Let $p$ be a prime and let $q$ be an odd prime that divides $p-1$.  Then there exists a group $G$ with ${\rm dl} (G) = 4$, Fitting height $3$, and $\cd G = \{ 1, q, (p^q-1)/(p-1))_{q'} (p-1)_q q, p^q ((p^q-1)/(p-1))_{q'} (p-1)_q q \}$.
\end{theorem}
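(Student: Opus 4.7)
My plan is to apply Lemma~\ref{facts}(1). Take $P$ to be the Heisenberg group over $F = \mathbb{F}_{p^q}$, so $\cd P = \{1, p^q\}$ (giving $\alpha = q$), with $P' = Z(P) \cong F$ additively and commutator form $\omega((x_1,y_1),(x_2,y_2)) = x_1 y_2 - x_2 y_1$. Writing $m = v_q(p-1)$ and $s'' = ((p^q-1)/(p-1))_{q'}$, the prescribed complement has order $s''\,q^{m+1}$; I will realize $H$ inside $\Gamma L_1(F) = F^* \rtimes \mathrm{Gal}(F/\mathbb{F}_p)$.

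Let $\sigma \colon x \mapsto x^p$ be the Frobenius and $N = N_{F/\mathbb{F}_p}$. Choose $k \in F^*$ so that $N(k)$ generates the Sylow $q$-subgroup of $\mathbb{F}_p^*$ (which is possible because $N$ is surjective onto the cyclic group $\mathbb{F}_p^*$), and set $g = k\sigma$. Then $g^q = k^{(p^q-1)/(p-1)} = N(k)$ has order $q^m$, so $|g| = q^{m+1}$. Let $A$ be the unique cyclic subgroup of $F^*$ of order $s''$; since $s'' \mid (p^q-1)/(p-1) = |\ker N|$, $A \leq \ker N$. Put $H = A\langle g \rangle$: the identity $gag^{-1} = \sigma(a) = a^p$ shows $A$ is normalized by $\langle g \rangle$, and coprimality $\gcd(s'', q^{m+1}) = 1$ gives $H = A \rtimes \langle g \rangle$ of order $s''\,q^{m+1}$, a metacyclic group. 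A short check shows $\gcd(s'', p-1) = 1$ (from $(p^q-1)/(p-1) \equiv q \pmod{p-1}$), so $p$ has order exactly $q$ in $(\mathbb{Z}/s'')^*$; Clifford theory over the abelian normal subgroup $A$ then yields $\cd H = \{1, q\}$, so $a = q$ in the lemma.

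The main obstacle is proving that $H$ acts Frobeniusly on $P$. I would let $h = \lambda\sigma^i \in \Gamma L_1(F)$ act on $(x,y,z) \in P$ by $(\lambda x^{p^i}, \lambda y^{p^i}, \lambda^2 z^{p^i})$; the $\lambda^2$ on the central coordinate is forced by $\omega(hu,hv) = \lambda^2 \omega(u,v)^{p^i}$, which makes this an automorphism of $P$. Writing a general element as $h = aK_j\sigma^{i_j}$ with $K_j = k^{(p^j-1)/(p-1)}$ and $i_j = j \bmod q$, a nontrivial fixed point in $P$ forces, when $i_j \neq 0$, that $aK_j \in \ker N$ (from a fixed point on $P/P'$) or $(aK_j)^2 \in \ker N$ (from a fixed point on $P'$). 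Since $A \leq \ker N$, $N(aK_j) = N(K_j) = N(k)^{(p^j-1)/(p-1)}$, and the exponent $(p^j-1)/(p-1) \equiv j \not\equiv 0 \pmod{q}$ shows this has the same order $q^m$ as $N(k)$; being odd and at least $q \geq 3$, it is neither $1$ nor $-1$, ruling out both cases. When $i_j = 0$, $aK_j \in A \cdot \mathbb{F}_p^*$ with $A \cap \mathbb{F}_p^* = 1$, so a fixed point forces $h = 1$. Thus $H$ is Frobenius on $P$.

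Lemma~\ref{facts}(1) then gives $\cd G = \{1, q, |H|, |H|p^q\}$ and ${\rm dl}(G) = 4$. For the Fitting height, $F(G) = P$ because the Frobenius action forces $C_H(P) = 1$, and $H$ has Fitting height $2$ (its Fitting subgroup $A \cdot \langle g^q \rangle$ has index $q$), so $G$ has Fitting height $3$, completing the theorem.
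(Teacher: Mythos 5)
Your proposal is correct and follows essentially the same route as the paper: you take $P$ to be the Heisenberg group over the field of order $p^q$, build $H$ inside $\Gamma L_1(p^q)$ as the product of a cyclic norm-one subgroup of order $((p^q-1)/(p-1))_{q'}$ with the cyclic group generated by a field-automorphism-twisted element of order $(p-1)_q q$, verify the Frobenius action, and invoke case (1) of Lemma~\ref{facts}. Your Hilbert-90 verification that every nonidentity element of $H$ acts fixed-point-freely is in fact more detailed than the paper's, which only checks the Sylow $q$-subgroup and the Hall $q$-complement (sufficient, since elements of prime order are conjugate into one of these).
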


\begin{proof}
We are going to take $P$ to be the Heisenberg group of order $p^{3q}$.  We represent $P$ as follows.  Let $F$ be the field of order $p^q$.  Then we can view $P$ as
$$
\left\{ \left[\begin{array}{ccc} 1 & a & c \\ 0 & 1 & b \\ 0 & 0 & 1 \end{array}\right] \mid a,b,c \in F \right\}.
$$
We write $F^*$ for the multiplicative group of $F$ and $\mathcal {G}$ for the Galois Group of $F$ with respect to $Z_p$.  It is easy to see that $\mathcal {G}$ acts on $F^*$ and that the resulting semi-direct product $F^* \mathcal {G}$ is isomorphic to the semi-linear group $\Gamma (F)$.  (See page 37 of \cite{MaWo} for the definition of the semilinear group.)  We can define an action by automorphisms for $\Gamma (F)$ on $P$ as follows: if $\lambda \in F^*$, then
$$
\left[ \begin{array}{ccc} 1 & a & c \\ 0 & 1 & b \\ 0 & 0 & 1  \end{array} \right] \cdot \lambda = \left[ \begin{array}{ccc} 1 & \lambda a & \lambda^2 c \\ 0 & 1 & \lambda b \\ 0 & 0 & 1 \end{array} \right]
$$
where the multiplication is in $F$ and if $\sigma \in \mathcal {G}$ then
$$
\left[ \begin{array}{ccc} 1 & a & c \\ 0 & 1 & b \\ 0 & 0 & 1  \end{array} \right] \cdot \sigma = \left[ \begin{array}{ccc} 1 & a^\sigma & c^\sigma \\ 0 & 1 & b^\sigma \\ 0 & 0 & 1 \end{array} \right].
$$
Notice that $\lambda$ has odd order, then the action of $\lambda$ on $P$ is Frobenius.

Now, let $\gamma$ be an element of $F^*$ of order $((p^q-1)/(p-1))_{q'}$.  Notice that the order of $\gamma$ is odd, so $\gamma^2 \ne 1$.  Let $\lambda$ be a generator for the Sylow $q$-subgroup of $F^*$ and let $\sigma \in \mathcal {G}$ be the Frobenius automorphism, so $\sigma$ has order $q$.  We define $H = \langle \gamma, \lambda \sigma \rangle$.  It is not difficult to see that $\sigma$ does not commute with $\gamma$ and so $H$ is not abelian.  On the other hand, $H$ has a normal abelian subgroup of index $q$, so $\cd H = \{ 1, q \}$.  We observe that
$$
(\lambda \sigma)^q = \lambda^{\sigma^q} \dots \lambda^{\sigma^2} \lambda^\sigma = \lambda^{p^{q-1}} \cdots  \lambda^p \lambda = \lambda^{1 + p + \dots +p^{q-1}} = \lambda^{(p^q-1)/(p-1)}.
$$
Since $q$ divides $p-1$, it follows that $\lambda^{(p^q-1)/(p-1)} \ne 1$.  Note that $Q = \langle \lambda\sigma \rangle$ is a Sylow $q$-subgroup of $H$, is cyclic, has order $(p^q - 1)q$, and acts Frobeniusly on $P$.  Also, observe that $\langle \gamma \rangle$ is a Hall $q$-complement of $H$, has order $((p^q-1)/(p-1))_{q'}$, and also acts Frobeniusly on $P$.  Observe that $C_H (P') = 1$ and $|H| = ((p^q-1)/(p-1))_{q'} (p-1)_q q$.  We conclude that $H$ and $P$ satisfy the hypotheses of Lemma \ref{facts}, and we obtain the conclusion from there.  Note that $P$ is the Fitting subgroup of $G$ and $H$ has Fitting height $2$, so $G$ has Fitting height $3$.
\end{proof}

In this next example, the normal Sylow $p$-subgroup is again a Heisenberg group.  In this case, $G$ is not a Frobenius group.

\begin{theorem} \label{two}
Let $p$ be a prime, let $q$ be an odd prime that divides $p-1$, and let $r$ be a divisor of $p-1$ that is coprime to $q$.  Then there exists a group $G$ with ${\rm dl} (G) = 4$, Fitting height $3$, and $\cd G = \{ 1, q, ((p^q-1)/(p-1))_{q'} (p-1)_q q r, p^q ((p^q-1)/(p-1))_{q'} (p-1)_q q \}$.
\end{theorem}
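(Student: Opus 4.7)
The plan is to mimic the construction of Theorem~\ref{one}, keeping the same Heisenberg group $P$ of order $p^{3q}$ over $F = \mathrm{GF}(p^q)$ and the same subgroup $H_1 = \langle \gamma, \lambda\sigma\rangle$, but adjoin a central cyclic factor of order $r$ that centralizes $P'$. This extra factor will serve as the subgroup $C$ in hypothesis~(2) of Lemma~\ref{facts}: it multiplies $|H|$ by $r$ (producing the new third character degree) while leaving $|H{:}C|\,p^\alpha$ unchanged (producing the fourth).

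To produce the extra factor, choose $\tau$ of order $r$ in the prime subfield $Z_p \subseteq F$ (possible since $r \mid p - 1$), and define $\tau' \in \aut{P}$ by $(a,b,c) \mapsto (\tau a, \tau^{-1} b, c)$. Since $\tau \cdot \tau^{-1} = 1$, this preserves the commutator pairing $a_1 b_2 - a_2 b_1$, so $\tau'$ is an automorphism of $P$ and clearly fixes $P'$ pointwise. As $\tau \in Z_p$ is fixed by the Frobenius $\sigma$, and $F^*$ is abelian, $\tau'$ commutes with each of $\gamma$ and $\lambda\sigma$, hence with all of $H_1$. Moreover, $|H_1|$ is odd (each of $((p^q-1)/(p-1))_{q'}$, $(p-1)_q$, and $q$ is odd), so any nontrivial element of $H_1$ centralizing $P'$ would have to act as scalar $-1$, which is excluded by oddness; this shows $H_1 \cap C_{\aut{P}}(P') = 1$ and in particular $\langle\tau'\rangle \cap H_1 = 1$. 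Set $H = H_1 \times \langle\tau'\rangle$ and $G = P \rtimes H$, so that $|H| = ((p^q-1)/(p-1))_{q'}(p-1)_q q r$, $\cd H = \{1, q\}$, and $p \nmid |H|$.

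To apply hypothesis~(2) of Lemma~\ref{facts}, note that $C := C_H(P') = \langle\tau'\rangle$ is abelian of order $r$ by the preceding remark, and $D := C_P(C) = P'$ since $\tau \neq 1$ has no nonzero fixed element in $F$; consequently the side condition ``$|H{:}C| = a$ when $P' < D$'' does not apply. The $p^q - 1$ nonlinear characters of the Heisenberg group $P$ all have degree $p^q = \sqrt{[P:P']}$ and so are fully ramified with respect to $P/P'$. The action of $H/C \cong H_1$ on $P' \cong F$ is given by $c \mapsto \lambda_0^2 c^{\sigma^l}$, which under $\lambda_0 \sigma^l \mapsto (\lambda_0^2, \sigma^l)$ embeds as a subgroup of $\Gamma(F)$; since $\Gamma(F)$ acts Frobeniusly on $F$, so does $H_1$ on $P'$.

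The main obstacle is verifying that $H$ itself acts Frobeniusly on $P/P'$. An element $h = h_1 \tau'^i$ with $h_1 = \lambda_0 \sigma^l$ acts on $(a, b) \in P/P'$ as $(\tau^i \lambda_0 a^{\sigma^l}, \tau^{-i}\lambda_0 b^{\sigma^l})$; a nonzero fixed vector forces the semi-linear map $x \mapsto \tau^{\pm i}\lambda_0 x^{\sigma^l}$ to have a fixed $x \neq 0$, which in the Frobenius group $\Gamma(F)$ means $l = 0$ and $\lambda_0 \in \{\tau^i, \tau^{-i}\} \subseteq Z_p^*$. Using the congruence $1 + p + \cdots + p^{q - 1} \equiv q \pmod{p - 1}$, one computes $H_1 \cap Z_p^*$ to be the Sylow $q$-subgroup of $Z_p^*$, of order $(p-1)_q$ coprime to $r$; hence $\tau^i = 1$, forcing $i = 0$ and then $\lambda_0 = 1$, so $h = 1$. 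Lemma~\ref{facts}(2) now gives ${\rm dl}(G) = 4$ and $\cd G = \{1, q, ((p^q-1)/(p-1))_{q'}(p-1)_q qr, p^q((p^q-1)/(p-1))_{q'}(p-1)_q q\}$. Since $H$ acts faithfully on $P$, we have $F(G) = P$; and because $H = H_1 \times \langle\tau'\rangle$ is the direct product of $H_1$ (Fitting height $2$) and an abelian group, $H$ has Fitting height $2$, so $G$ has Fitting height $3$.
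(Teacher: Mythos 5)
Your construction is exactly the paper's: it adjoins to the group $H$ of Theorem~\ref{one} a commuting automorphism of order $r$ acting by $(a,b,c)\mapsto(\eta a,\eta^{-1}b,c)$ (the paper's $\eta$, your $\tau'$), identifies $C=\langle\eta\rangle$ and $D=P'$, and invokes hypothesis~(2) of Lemma~\ref{facts}, so the approach is essentially identical (the paper even uses the same coprimality fact $\gcd((p^q-1)/(p-1),\,p-1)=q$ that underlies your order computations). One small blemish: your parenthetical justification that ``$\Gamma(F)$ acts Frobeniusly on $F$'' is false as stated (Galois automorphisms fix the prime subfield), but the fact you need --- that $H_1$ acts Frobeniusly on $P'$ --- already follows from the Frobenius action of $H_1$ on all of $P$ established in Theorem~\ref{one}, so the argument stands.
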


\begin{proof}
We again take $P$ to be the Heisenberg group of order $p^{3q}$ and we take $K$ to the subgroup $H$ from Theorem \ref{one}, so $|K| = ((p^q-1)/(p-1))_{q'} (p-1)_q q$.  Let $\eta$ be an element of $F^*$ with order $r$.  It is known that the gcd of $(p^q-1)/(p-1)$ and $p-1$ is $q$ (see \cite{nor}).  Thus, since $r$ divides $p-1$ and is coprime to $q$, it follows that $r$ is coprime to $((p^q-1)/(p-1))_{q'}$.  We can define an action of $\eta$ on $P$ by
$$
\left[ \begin{array}{ccc} 1 & a & c \\ 0 & 1 & b \\ 0 & 0 & 1  \end{array} \right] \cdot \eta = \left[ \begin{array}{ccc} 1 & \eta a & c \\ 0 & 1 & \eta^{-1} b \\ 0 & 0 & 1 \end{array} \right]
$$
where the multiplication is in $F$.  Note that the action of $\eta$ will commute with the actions of $\lambda$ and $\gamma$.  Also, notice that $\eta$ will be in the prime subfield of $F$, so $\eta$ and $\sigma$ commute.  We deduce that $\eta$ centralizes $K$.  Thus, we can take $H = K \times \langle \eta \rangle$.  Notice that $\eta$ acts Frobeniusly on $P/P'$ and centralizes $P'$.  It follows that $C = C_H (P') = \langle \eta \rangle$ and $D = C_P (C) = P'$.  In particular, $G = P \rtimes H$ satisfies the hypotheses of Lemma \ref{facts}, and we obtain the derived length and character degrees from that lemma.  It is easy to see that $G$ has Fitting height $3$.
\end{proof}

This next two groups are based on extraspecial groups of order $p^{2q+1}$ where $q$ is a prime.  In the next theorem, we have that $|H:C| = q$ using the notation of Lemma \ref{facts}.

\begin{theorem} \label{three}
Let $p$ be a prime and let $q$ be an odd prime that divides $p-1$.  Then there exists a group $G$ with ${\rm dl} (G) = 4$, Fitting height $3$, and $\cd G = \{ 1, q, (p^q-1)_{q'} (p-1)_q q, p^q q \}$.
\end{theorem}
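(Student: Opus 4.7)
The plan is to apply Lemma~\ref{facts}~(2) with $P$ an extraspecial group of order $p^{2q+1}$ and exponent $p$, realized as triples $(a,b,c)$ with $a,b\in F=\mathbb{F}_{p^q}$, $c\in\mathbb{F}_p$, and multiplication $(a_1,b_1,c_1)(a_2,b_2,c_2)=(a_1+a_2,b_1+b_2,c_1+c_2+{\rm Tr}_{F/\mathbb{F}_p}(a_1 b_2))$. Then $P'=Z(P)\cong\mathbb{F}_p$, every nonlinear character of $P$ has degree $p^q$ and is fully ramified with respect to $P/P'$, so $\alpha=q$ and $D$ will equal $P'$.

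For $\mu,\nu\in F^\times$ with $\mu\nu\in\mathbb{F}_p^\times$ and $\tau\in\mathcal{G}={\rm Gal}(F/\mathbb{F}_p)$, the triple $(\mu,\nu,\tau)$ acts on $P$ by $(a,b,c)\mapsto(\mu a^\tau,\nu b^\tau,\mu\nu c)$, with composition $(\mu_1,\nu_1,\tau_1)(\mu_2,\nu_2,\tau_2)=(\mu_1\mu_2^{\tau_1},\nu_1\nu_2^{\tau_1},\tau_1\tau_2)$. I would fix a generator $\omega$ of $F^\times$, the Frobenius $\sigma$, and $\zeta=\omega^{(p^q-1)/q}\in\mathbb{F}_p^\times$ of order $q$ (available since $q\mid p-1$), and set $c_0=(\omega^q,\omega^{-q},1)$, $C=\langle c_0\rangle$, $h=(\omega,\zeta\omega^{-1},\sigma)$, and $H=C\langle h\rangle$.

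The verifications I would carry out are: (i) the composition rule yields $h c_0 h^{-1}=c_0^p$, so $C\trianglelefteq H$, and since $|C|=(p^q-1)/q>p-1$ this action is nontrivial, making $H$ nonabelian; (ii) induction gives $h^q=(g,g^{-1},1)\in C$, where $g=\omega^{(p^q-1)/(p-1)}$ generates $\mathbb{F}_p^\times$ and $q\mid(p^q-1)/(p-1)$, so $|H|=q|C|=p^q-1$ and $\cd H=\{1,q\}$, i.e.\ $a=q$; (iii) $h$ acts on $P'$ by $\zeta$, so $C_H(P')=C$ and $|H{:}C|=q=a$; (iv) $c_0$ acts fixed-point-freely on $V=P/P'$, so $C_P(C)=P'$ and $D=P'$; (v) $H/C$ of order $q$ acts Frobeniusly on $D\cong\mathbb{F}_p$ via $\zeta$. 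These are all direct computations.

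The main obstacle will be showing that $H$ itself acts Frobeniusly on $V$. A general element of $H$ can be written as $(\omega^{qj+s_k},\zeta^k\omega^{-(qj+s_k)},\sigma^k)$ with $s_k=(p^k-1)/(p-1)\equiv k\pmod{p-1}$. I would use that, for $1\le k<q$, the image of $x\mapsto x^{p^k-1}$ on $F^\times$ is precisely the kernel of $N_{F/\mathbb{F}_p}$ (since $\gcd(p^q-1,p^k-1)=p-1$), to reduce the fpf requirement on each coordinate to $g^{qj+k}\ne 1$ in $\mathbb{F}_p^\times$, i.e.\ $(p-1)\nmid qj+k$; this holds because $qj$ is always a multiple of $q$ modulo $p-1$, while $1\le k\le q-1$ is never a multiple of $q$ modulo $p-1$. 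The case $k=0$, $j\ne 0$ is immediate from $\omega^{qj}\ne 1$. Lemma~\ref{facts}~(2) would then yield ${\rm dl}(G)=4$ and $\cd G=\{1,q,p^q-1,qp^q\}$, and rewriting $p^q-1=(p^q-1)_{q'}(p-1)_q q$ via lifting-the-exponent matches the stated degrees. Fitting height $3$ follows from $F(G)=P$ together with $F(H)=C$ and $H/C$ cyclic.
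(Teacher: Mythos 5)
Your construction is correct and is essentially the paper's own: the same extraspecial $P$ of order $p^{2q+1}$, and your $H$ (with the central twist $\zeta$ folded into the second coordinate of the semilinear action) is the same subgroup of $\aut P$ of order $p^q-1$ that the paper builds from $\gamma$, $\lambda$, $\xi$, $\sigma$, with the same $C=C_H(P')$ of index $q$ and the same appeal to Lemma~\ref{facts}(2). The only differences are cosmetic coordinates (trace form in place of the dual space) plus the fact that you spell out the norm-kernel argument for the Frobenius action on $P/P'$, which the paper leaves as ``not difficult to see.''
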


\begin{proof}
Let $V$ be a vector space of dimension $q$ over $Z_p$, the field of order $p$.  Let $\hat V$ be the dual space for $V$; that is, $\hat V$ is the set of all linear transformations from $V$ to $Z_p$.  We define $P = \{ (a,\alpha,z) \mid a \in V, \alpha \in \hat V, z \in Z_p \}$ where multiplication in $P$ is defined by $(a_1,\alpha_1,z_1)(a_2,\alpha_2,z_2) = (a_1 + a_2, \alpha_1 + \alpha_2, z_1 + z_2 + \alpha_2 (a_1))$.  It is not difficult to see that $P$ is an extraspecial $p$-group of order $p^{2q +1}$.

If $\delta$ is an automorphism of $V$, then we obtain an automorphism for $\hat V$ by defining $\alpha^\delta$ by $\alpha^\delta (v) = \alpha ( v^{\delta^{-1}})$ for all $v \in V$.  Note that $\alpha^\delta (v^\delta) = \alpha (v)$.  It is not difficult to see that we can define an automorphism on $P$ by $(v,\alpha,z)^\delta = (v^\delta,\alpha^\delta,z)$.

We can identify $V$ with the additive group of the field $F$ of order $p^q$.  If $\lambda$ is a nonzero element of $F$, then multiplication by $\lambda$ yields an automorphism of $V$, and we use $v^\lambda$ to denote this map on $V$ and $\alpha^\lambda$ to denote the associated map on $\hat V$.  Also, the Galois automorphisms of $F$ will yield automorphisms of $V$.  If $\sigma$ is a Galois automorphism of $F$, then we use $v^\sigma$ to be the automorphism on $V$ and $\alpha^\sigma$ for the associated map on $\hat V$.  We take $\gamma$ to be a generator of the Hall $q$-complement of $F^*$.  We take $\lambda$ to be a generator for the Sylow $q$-subgroup of $F^*$.  We take $\sigma$ to be the Frobenius automorphism.  We use these same letters to denote the automorphisms of $P$ given by each of these elements as above.

Let $x$ be a non-zero element of $Z_p$, then we can view $x$ as element of $F^*$, and so we can define the action of $x$ on $V$ and $\hat V$ as before.  Notice that since the elements of $\hat V$ are linear transformations, we have that $\alpha (v^x) = \alpha (xv) = x \alpha (v)$ and $\alpha^{x^{-1}} (v) = \alpha (v^x)$, so $\alpha^{x^{-1}} (v^x) = x^2 \alpha (v)$.  Thus, we define an automorphism of $P$ by $(v,\alpha,z) \mapsto (v^x,\alpha^{x^{-1}},x^2 z)$.  Let $\xi$ be this automorphism defined for an element $x$ of order $q$ in $Z_p^*$.  It is easy to see that $\xi$ will commute with $\gamma$ and $\lambda$ as automorphisms of $P$.  Since as a Galois automorphism, $\sigma$ fixes the elements in $Z_p$, it is not difficult to see that $\sigma$ and $\xi$ will commute.  Let $H = \langle \gamma, \lambda\xi\sigma  \rangle$.  Since $\gamma$ commutes with $\lambda$ and $\xi$, but not $\sigma$, we see that $H$ is not abelian.  Working as in the proof of the last theorem, we see that $(\lambda \xi \sigma)^q = \lambda^{(p^q-1)/(p-1)}$ which will have order $(p-1)_q$.  Notice that $C = \langle \gamma, \lambda^{(p^q-1)/(p-1)} \rangle$ has index $q$ in $H$, is abelian, and centralizes $P'$.   It follows that $\cd H = \{ 1, q \}$ and $C = C_H (P')$.  Observe that $P' = C_P (C)$ which is $D$ in the notation of Lemma \ref{facts}.  It is not difficult to see that $H$ acts Frobeniusly on $P/D$ and that $H/C$ acts Frobeniusly on $D$, so the hypotheses of Lemma \ref{facts} are met.  We obtain the derived length and the character degree set conclusions from that result. Notice that $P$ is the Fitting subgroup of $G$ and $G/P \cong H$ has Fitting height $2$, so $G$ has Fitting height $3$.
\end{proof}

In this next example, we again have an extraspecial group of order $p^{2q+1}$, but in this case, we have $|H:C|$ is relatively prime to $q$, again using the notation of Lemma \ref{facts}.

\begin{theorem} \label{four}
Let $p$ be a prime, let $q$ a prime that divides $p-1$, and let $r$ be an odd divisor of $p-1$ that is relatively prime to $q$.  Then there exists a group $G$ with ${\rm dl} (G) = 4$, Fitting height $3$, and $\cd G = \{ 1, q, (p^q-1)_{\{q,r\}'} (p-1)_q q r, p^q r \}$.
\end{theorem}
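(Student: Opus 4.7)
The plan is to adapt the construction of Theorem \ref{two} to the extraspecial setting of Theorem \ref{three}. Take $P$ to be the extraspecial group of order $p^{2q+1}$ with its automorphisms $\gamma,\lambda,\xi,\sigma$ constructed in the proof of Theorem \ref{three}, and replace $\gamma$ by an element $\tilde\gamma\in F^*$ of order $(p^q-1)_{\{q,r\}'}$ — trimming away the $r$-part of the original order of $\gamma$, which is a legitimate operation by the same gcd calculation ($\gcd((p^q-1)/(p-1),r)=1$, whence $(p^q-1)_r=(p-1)_r$) that Theorem \ref{two} uses. With $K:=\langle\tilde\gamma,\lambda\xi\sigma\rangle$, the proof of Theorem \ref{three} applies verbatim (with $\tilde\gamma$ in place of $\gamma$) to give $|K|=(p^q-1)_{\{q,r\}'}(p-1)_q q$, $\cd K=\{1,q\}$, $C_K(P')=\langle\tilde\gamma,\lambda^{(p^q-1)/(p-1)}\rangle$ abelian inside $F^*$, Frobenius action of $K$ on $P/P'$, and Frobenius action of $K/C_K(P')$ on $P'$.

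Next I introduce the extra commuting factor $\eta$ in the style of Theorem \ref{two}. Pick $\eta\in Z_p^*$ of order $r$ (available since $r\mid p-1$) and define an automorphism of $P$ by $(v,\alpha,z)\mapsto(\eta v,\eta^{-1}\alpha,z)$; a routine computation identical in form to the Heisenberg check in Theorem \ref{two} confirms that this is an automorphism that centralizes $P'$ and acts as nontrivial scalars on $V$ and on $\hat V$. Because $\eta$ lies in $Z_p^*$ it is Galois-fixed and commutes with every $F^*$-scalar as well as with the $Z_p^*$-scalar $\xi$, so $\eta$ centralizes $K$, and I set $H:=K\times\langle\eta\rangle$. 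Then $|H|=(p^q-1)_{\{q,r\}'}(p-1)_q qr$, $p\nmid|H|$, $\cd H=\cd K\cdot\cd{\langle\eta\rangle}=\{1,q\}$, and $C:=C_H(P')=C_K(P')\times\langle\eta\rangle$ is abelian (both factors lie in commuting abelian subgroups). Because $\eta$ acts fixed-point-freely on $V\oplus\hat V$, $D:=C_P(C)=P'$, so the side clause ``$|H:C|=a$ when $P'<D$'' of Lemma \ref{facts}(2) does not come into play.

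What remains to verify in Lemma \ref{facts}(2) is full ramification of nonlinear irreducible characters of $P$ with respect to $P/D$ (automatic since $P$ is extraspecial and $D=Z(P)$), the Frobenius action of $H/C$ on $D=Z_p$ (inherited from the Theorem \ref{three} analysis of $K/C_K(P')$ on $P'$), and — the main obstacle — the Frobenius action of $H$ on $P/D=V\oplus\hat V$. Each nonidentity element of $H$ has the form $\kappa\eta^j$ with $\kappa\in K$ and $0\le j<r$; for $j=0$ the absence of nonzero fixed points is Theorem \ref{three}'s verification, while for $j\ne 0$ the scalar action of $\eta^j$ is nontrivial on each of $V$ and $\hat V$, so one must argue that combining this with the semilinear action of $\kappa$ still yields no nonzero fixed point — a case analysis that reduces, by standard manipulation in $F^*\langle\sigma\rangle$, to the same one-variable equation that governed the Frobenius computation for Theorem \ref{one}'s $K$. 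Once this is in place, Lemma \ref{facts}(2) yields ${\rm dl}(G)=4$ and the claimed $\cd G$, and the Fitting height of $G$ is $3$ because $P$ is the Fitting subgroup of $G$ and $G/P\cong H$ has Fitting height $2$.
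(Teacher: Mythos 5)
There is a genuine gap: your construction does not yield the claimed degree set. By keeping Theorem \ref{three}'s $\xi$ (built from an element $x$ of order $q$ in $Z_p^*$) inside the generator $\lambda\xi\sigma$ of $K$, you have arranged that the part of $H$ failing to centralize $P'=Z(P)$ is $\lambda\xi\sigma$, which acts on the center by $z\mapsto x^2z$; meanwhile your new factor $\eta$ fixes the third coordinate and therefore lies in $C=C_H(P')$. Hence $C=C_K(P')\times\langle\eta\rangle$ has index $q$ in $H$, and Lemma \ref{facts} returns the top degree $|H:C|\,p^q=qp^q$ rather than $rp^q$. Your group is a perfectly good example of derived length four with four degrees (modulo the Frobenius verification on $P/D$ that you defer), but its degree set is $\{1,\,q,\,(p^q-1)_{\{q,r\}'}(p-1)_q qr,\,p^qq\}$ --- essentially Theorem \ref{three}'s set with an enlarged $|H|$ --- and not the set in the statement, whose top degree $p^qr$ forces $|H:C|=r$.

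The paper makes exactly the opposite allocation of which generator moves the center: it sets $H=\langle\gamma,\lambda\sigma,\xi\rangle$, where $\lambda\sigma$ carries no $\xi$-factor (so that $\langle\gamma,\lambda\sigma\rangle$ centralizes $P'$) and $\xi$ is the automorphism $(v,\alpha,z)\mapsto(v^x,\alpha^{x^{-1}},x^2z)$ attached to an element $x$ of order $r$ in $Z_p^*$. Since $r$ is odd, $x^2$ also has order $r$, so $\xi$ acts Frobeniusly on $P'$, $C=\langle\gamma,\lambda\sigma\rangle$ has index $r$, and the top degree is $p^qr$ as required. The underlying obstruction to your approach is structural: in the extraspecial group the commutator pairing between $V$ and $\hat V$ lands in the center, so a scalar acting by $\eta$ on $V$ and by $\eta^{-1}$ on $\hat V$ (the direct transplant of Theorem \ref{two}'s Heisenberg twist) necessarily fixes $Z(P)$ and can never contribute the factor $r$ to $|H:C|$; to get $r$ into $|H:C|$ you must use the ``balanced'' scalar action $(v^x,\alpha^{x^{-1}},x^2z)$, and correspondingly strip the order-$q$ element $\xi$ out of the $\lambda\sigma$ generator.
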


\begin{proof}
As in the proof of Theorem \ref{three}, we take $V$ to a vector space of dimension $q$ over $Z_p$, we write $\hat V$ for the dual space for $V$, and $P$ for the associated extraspecial group.  Again, we take $F$ to be the field of order $p^q$ and we have the same action for elements of $F^*$ and the Galois group of $F$ on $P$.  We take $\gamma$ to be a generator for the Hall $\{q,r\}$-complement of $F^*$, $\lambda$ to be a generator for the Sylow $q$-subgroup of $F^*$, and $\sigma$ to be the Frobenius automorphism of $F$.  We now take $x$ to be an element of order $r$ in $Z_p^*$ and we let $\xi$ be the automorphism of $P$ defined for $x$.  Take $H = \langle \gamma, \lambda \sigma, \xi \rangle$.  Since $\gamma$ commutes with $\lambda$, but not $\sigma$, we see that $H$ is not abelian.  We see that $(\lambda \sigma)^q = \lambda^{(p^q-1)/(p-1)}$ which will have order $(p-1)_q$.  Observe that $\langle \gamma, \lambda^{(p^q-1)/(p-1)}, \xi \rangle$ is a normal, abelian subgroup of index $q$, so $\cd H = \{ 1, q \}$.  Also, $C = C_H (P') = \langle \gamma, \lambda \sigma \rangle$ and $D = C_P (C) = P'$.  It is not difficult to see that $H$ acts Frobeniusly on $P/D$ and that $H/C$ acts Frobeniusly on $D$, so the hypotheses of Lemma \ref{facts} are met.  We obtain the derived length and the character degree set conclusions from that result. Notice that $P$ is the Fitting subgroup of $G$ and $G/P \cong H$ has Fitting height $2$, so $G$ has Fitting height $3$.
\end{proof}

We now present an example where $P' < D = C_P (C_H(P'))$.  A careful reading of the previous examples will show that $P' = D = C_P (C_H (P'))$ in all of them.

\begin{theorem} \label{five}
Let $p$ be a prime, let $q$ an odd prime that divides $p-1$, and let $n > q$ be an integer.  Then there exists a group $G$ with ${\rm dl} (G) = 4$, Fitting height $3$, and $\cd G = \{ 1, q, (p^q-1)_{q'} (p-1)_q q, p^n q \}$.
\end{theorem}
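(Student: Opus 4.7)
The plan is to apply Lemma \ref{facts}(2) using the same group $H$ as in Theorem \ref{three} (so $|H| = (p^q-1)_{q'}(p-1)_q q$, $\cd H = \{1,q\}$, and $|H:C| = q$), but replacing the extraspecial group there with a central product $P = P_0 *_Z E$ in which the quotient $H/C$ acts Frobeniusly on a second factor $E$. This second factor will be the group $D$ of the Lemma, forcing $P' \lneq D$.

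Specifically, take $P_0$ to be the extraspecial group of order $p^{2q+1}$ constructed in the proof of Theorem \ref{three}, equipped with the same $H$-action, and take $E$ to be an extraspecial $p$-group of order $p^{2(n-q)+1}$. Identifying $Z(E)$ with $Z(P_0)$, form $P = P_0 *_Z E$; this is extraspecial of order $p^{2n+1}$ with $P' = Z(P) = Z$ of order $p$, hence $\cd P = \{1, p^n\}$. To extend the $H$-action to $E$, I declare that the generators $\gamma$ and $(\lambda\xi\sigma)^q$ of $C$ act trivially on $E$, while $\lambda\xi\sigma$ acts on $E/Z$ as scalar multiplication by $x$ and on $Z$ as multiplication by $x^2$, where $x \in Z_p^*$ is the order-$q$ element from Theorem \ref{three} (so that $\xi$ already acts on $Z(P_0)$ by $x^2$). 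Because the commutator form on $E/Z$ is scaled by $x^2$ under the scalar $x$, and $p$ is odd, this lifts to a genuine order-$q$ automorphism of $E$; and because the actions on the identified center agree, the recipe glues to a coherent $H$-action on $P$.

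With this setup, most of the hypotheses of Lemma \ref{facts}(2) are easy to check. $C = C_H(P')$ is the abelian subgroup of index $q$ from Theorem \ref{three}; since $C$ acts trivially on $E$ and $C_{P_0}(C) = Z$, we get $D = C_P(C) = E$, so $P' = Z \lneq E = D \lneq P$. The action of $H$ on $P/D \cong P_0/Z$ is the Frobenius action of Theorem \ref{three}. The action of $H/C$ on $D = E$ is Frobenius because every nontrivial power of $\lambda\xi\sigma$ acts on $E/Z$ by a nontrivial $Z_p^*$-scalar and on $Z$ by a nontrivial scalar, and therefore has no nontrivial fixed point on $E$. Finally $|H:C| = q = a$, as required when $P' < D$.

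The main obstacle is verifying the full-ramification condition: every nonlinear $\chi \in \irr{P}$ must restrict to $D$ as a multiple of a single irreducible. The central-product description of $\irr P$ handles this. A nonlinear $\chi \in \irr{P}$ corresponds to a pair $(\chi_1,\chi_2) \in \irr{P_0} \times \irr{E}$ with a common nontrivial central character $\mu \in \irr{Z}$; since linear characters of an extraspecial group factor through the quotient by its center, the condition $\mu \ne 1$ forces both $\chi_1$ and $\chi_2$ to be nonlinear, so $\chi_1(1) = p^q$ and $\chi_2(1) = p^{n-q}$. Restricting to $D = E$ gives $\chi_D = p^q \chi_2$, and since $|P:D| = p^{2q}$, this is exactly full ramification with $e = p^q$. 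Lemma \ref{facts}(2) then yields ${\rm dl}(G) = 4$ and $\cd G = \{1, q, (p^q-1)_{q'}(p-1)_q q, q p^n\}$, and $G$ has Fitting height $3$ because $P$ is its Fitting subgroup and $H$ has Fitting height $2$.
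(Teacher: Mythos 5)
Your construction is essentially the paper's: the paper also forms the central product of the Theorem \ref{three} extraspecial group with a second extraspecial group of order $p^{2(n-q)+1}$ and exponent $p$, lets $C$ act trivially on the new factor while $x$ acts Frobeniusly on it compatibly with its action on the identified center, and then invokes Lemma \ref{facts}(2) with $D = C_P(C)$ equal to that factor. Your write-up simply makes explicit the verification of full ramification and the consistency of the glued action, which the paper leaves as remarks.
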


\begin{proof}
Let $P_1$ be the group $P$ from the Theorem \ref{three}.  We take $P_2$ to be an extraspecial group of order $p^{2(n-q)}$ and exponent $p$.   We will take $P$ to be a central product of $P_1$ and $P_2$, and we let $H$ be as in Theorem \ref{three}.  We have $H$ act on $P_1$ as it acted on $P$ in Theorem \ref{three}.  We will have $\gamma$, $\lambda$ and $\sigma$ act trivially on $P_2$, and it is not difficult to see that there is a Frobenius action of $x$ on $P_2$ so that the action on $Z(P_2)$ matches the action of $x$ on $Z(P_1)$.  This then defines an action of $H$ on $P$.  Notice that $C = C_H (P') = \langle \gamma, \lambda^{(p^q-1)/(p-1)} \rangle$ and $D = C_P (C) = P_2$ so $P' < D$.  Observe that $|H:C| = q$.  Also, all of the nonlinear irreducible characters of $P$ are fully-ramified with respect to $D$, so that the hypotheses of Lemma \ref{facts} are met. We obtain the conclusions regarding the derived length and character degrees from there.  The Fitting height follows as in Theorem \ref{three}.
\end{proof}

Next we present an example where $n$ is not a prime where $P$ is the normal Sylow $p$-subgroup and $\cd P = \{ 1, p^n \}$.  Note that in the previous examples, we have had $n$ as a prime.  Recall that $q$ is a Zsigmondy prime divisor of $p^n - 1$ for positive integers $p$ and $n$ if $q$ divides $p^n - 1$ and $q$ does not divide $p^a - 1$ for integers $a$ such that $1 \le a < n$.  Observe that none of the character degrees in this example is a prime.

\begin{theorem} \label{no prime}
Let $p$ be a prime and let $n$ be an odd integer so that every prime divisor of $n$ divides $p-1$.  Let $\pi$ be the set of prime divisors of $n$, let $\rho$ be the set of Zsigmondy prime divisors of $p^n - 1$, and let $m$ be an integer so that $m$ divides $n (p-1)_\rho$, $n$ divides $m$, and every prime divisor of $(p-1)_\rho$ divides $m/n$.  Then there exists a group $G$ with ${\rm dl} (G) = 4$, Fitting height $3$, and $\cd G = \{ 1, n, (p^n-1)_\rho m, p^n (p^n-1)_\rho m \}$.
\end{theorem}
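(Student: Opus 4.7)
The plan is to apply Lemma~\ref{facts}(1) to $G = P \rtimes H$, taking $P$ to be the Heisenberg group of order $p^{3n}$ built over the field $F$ of order $p^n$ exactly as in Theorem~\ref{one} (so $\cd P = \{1, p^n\}$ with $\alpha = n$), and $H$ a suitable subgroup of the semilinear group $\Gamma(F)$ acting on $P$ via $(a,b,c)\cdot\lambda = (\lambda a, \lambda b, \lambda^2 c)$ for $\lambda \in F^*$ together with coordinatewise Galois action. Case~(1) yields $\cd G = \{1, a, |H|, |H|p^\alpha\}$, so matching the target forces $\cd H = \{1, n\}$, $|H| = (p^n-1)_\rho m$, and a Frobenius action of $H$ on $P$.

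I would take $H = \langle \gamma, \nu\sigma \rangle$, where $\sigma$ is the Frobenius $x \mapsto x^p$ (of order $n$ in the Galois group), $\gamma \in F^*$ generates the Hall $\rho$-subgroup of order $(p^n-1)_\rho$, and $\nu \in F^*$ has order exactly $m$; such a $\nu$ exists since the hypotheses on $m$ together with the lifting-the-exponent formula $v_q(p^n-1) = v_q(p-1) + v_q(n)$ for odd primes $q \mid p-1$ force $m \mid p^n-1$. Conjugation by $\nu\sigma$ sends $\gamma$ to $\gamma^p$, and since $(p^n-1)_\rho$ (a $\rho$-number) and $m$ (a $\pi$-number) are coprime, $\langle\gamma\rangle \cap \langle\nu\sigma\rangle = 1$, so $H = \langle\gamma\rangle \rtimes \langle\nu\sigma\rangle$. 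The identity $(\nu\sigma)^n = \nu^{(p^n-1)/(p-1)}$ combined with $v_q((p^n-1)/(p-1)) = v_q(n)$ for $q \in \pi$ (again LTE) shows $|\nu\sigma| = m$, hence $|H| = (p^n-1)_\rho m$.

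Two verifications remain. For $\cd H = \{1, n\}$, I would apply Clifford theory to the normal abelian $\langle \gamma \rangle$ with cyclic quotient: each nontrivial $\chi \in \irr{\langle\gamma\rangle}$ of order $o$ has $\langle\nu\sigma\rangle$-orbit of length equal to the multiplicative order of $p$ modulo $o$, and Zsigmondy's theorem forces this to be exactly $n$ for every $o > 1$ dividing $(p^n-1)_\rho$. For the Frobenius action on $P$: an element $\gamma^i(\nu\sigma)^k = (\gamma^i\mu_k)\sigma^k$ with $\mu_k = \nu^{(p^k-1)/(p-1)}$ has a nontrivial first-coordinate fixed point iff $(\gamma^i\mu_k)^{-1}$ lies in $\{x^{p^k-1} : x \in F^*\}$, a subgroup of $F^*$ of order $(p^n-1)/(p^{\gcd(k,n)}-1)$. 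The $\rho$-factor $\gamma^{-i}$ always lies in this subgroup (by Zsigmondy), while the $\pi$-factor $\mu_k^{-1}$ does not for $1 \le k < m$, by a $v_q$-comparison at each $q \in \pi$ that crucially uses the hypothesis forcing $v_q(m) > v_q(n)$; the third-coordinate test is identical since $\nu$ has odd order. Lemma~\ref{facts}(1) then yields ${\rm dl}(G) = 4$ and the claimed $\cd G$. Finally, $P$ is the Fitting subgroup of $G$ and $H$ is metabelian of Fitting height $2$ (its Fitting subgroup being $\langle\gamma, \mu_n\rangle$), so $G$ has Fitting height $3$. The main technical obstacle is the Zsigmondy/LTE valuation bookkeeping in the Frobenius verification.
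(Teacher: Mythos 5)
Your proposal is correct and shares the paper's skeleton: the same Heisenberg group $P$ of order $p^{3n}$ over $F = GF(p^n)$ with the same $\Gamma(F)$-action, and the same reduction to case (1) of Lemma \ref{facts}. Where you genuinely diverge is in how the complement $H$ is obtained. The paper's argument is only a sketch: it invokes Theorem 11 of \cite{LeRi} to produce subgroups $K$ (cyclic, of order $(p^n-1)_\rho$) and $N$ (nilpotent, of order $m$) of $F^*\,{\rm Gal}(F)$ with $\cd{NK} = \{1,n\}$ and $NK$ acting Frobeniusly on $P$, and then applies the Lemma. You instead build $H = \langle\gamma\rangle \rtimes \langle\nu\sigma\rangle$ explicitly (so your $\langle\nu\sigma\rangle$ plays the role of $N$, here cyclic) and verify the two required properties by hand: $\cd H = \{1,n\}$ via Clifford theory plus the fact that Zsigmondy primes force $p$ to have order exactly $n$ modulo every nontrivial divisor of $(p^n-1)_\rho$, and the fixed-point-freeness via the valuation comparison $v_q(\mathrm{ord}(\mu_k)) > v_q\bigl((p^n-1)/(p^{\gcd(k,n)}-1)\bigr)$ at a prime $q\in\pi$ with $v_q(k) < v_q(n)$. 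That computation is sound, and it reduces exactly to the inequality $v_q(m) > v_q(n)$ you identify; your approach buys a self-contained proof at the cost of the LTE/Zsigmondy bookkeeping, while the paper's buys brevity at the cost of a black-box citation.

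One caution: your Frobenius verification requires $v_q(m) > v_q(n)$ for \emph{every} $q \in \pi$, and you extract this from the hypothesis ``every prime divisor of $(p-1)_\rho$ divides $m/n$.'' Read literally, $\rho$ consists of Zsigmondy primes of $p^n-1$, none of which divide $p-1$ (for $n>1$), so $(p-1)_\rho = 1$ and that hypothesis is vacuous; moreover $m \mid n(p-1)_\rho$ would then force $m = n$, in which case $(\nu\sigma)^n = 1$ and, as your own test shows, $\nu\sigma$ acquires a nonzero fixed point (since $\mathrm{ord}(\nu) = n$ divides $(p^n-1)/(p-1)$), so the action is not Frobenius. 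You have evidently read $(p-1)_\rho$ as $(p-1)_\pi$, which is the only reading under which the theorem's hypotheses are nonvacuous and your construction goes through; you should state that interpretation explicitly rather than leave it implicit.
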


\begin{proof}[Sketch of proof]
We take $P$ to be the Heisenberg group of order $p^{3n}$ and let $F$ be the field of order $p^n$.  Working as in the proof of Theorem \ref{one}, we can define an action of $F^*$ and ${\rm Gal} (F)$ on $P$.  Applying Theorem 11 of \cite{LeRi}, we can find subgroups $K$ and $N$ of $F^* {\rm Gal} (F)$ so that $|K| = (p^n-1)_\rho$, $|N| = m$, $\cd {NK} = \{ 1, n\}$, $K$ is cyclic, $N$ is nilpotent, and $NK$ acts Frobeniusly on $P$.  Take $G = PNK$. We now apply Lemma \ref{facts} to obtain the conclusion.
\end{proof}

We now produce an example with Fitting height $2$.  Notice that this yields Theorem \ref{fitting two}.

\begin{theorem}
Let $p$ be a prime that is congruent to $3$ modulo $8$.  Then there exists a group $G$ with ${\rm dl} (G) = 4$, Fitting height $2$, and $\cd G = \{ 1, 2, 8, 2 p^2 \}$.
\end{theorem}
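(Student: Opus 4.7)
The plan is to apply Lemma~\ref{facts}(2) with $P$ an extraspecial $p$-group of order $p^5$ and exponent $p$, $H \cong Q_8$, $|C| = 4$, $|H:C| = 2 = a$, and $\alpha = 2$; this yields $\cd G = \{1, 2, 8, 2p^2\}$ and ${\rm dl}(G) = 4$. Because $H$ will be a $2$-group acting faithfully on the $p$-group $P$, any normal nilpotent subgroup of $G$ meeting $H$ nontrivially must centralize $P$, so $F(G) = P$ and $G/F(G) \cong H$ is nilpotent, giving Fitting height $2$.

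Following the setup of Theorem~\ref{three}, let $V$ be a $2$-dimensional $Z_p$-space identified with $F = \mathbb{F}_{p^2}$, and form $P = V \oplus \hat V \oplus Z_p$. Since $p \equiv 3 \pmod 8$, one has $(p^2-1)_2 = 8$ and $(p+1)_2 = 4$, so $F^*$ contains an element $\lambda$ of order $8$. Let $\sigma$ be the Frobenius of $F$. Using $p+1 \equiv 4 \pmod 8$ one computes $(\lambda\sigma)^2 = \lambda^{p+1} = \lambda^4 = -1$ and $(\lambda\sigma)^{-1}\lambda^2(\lambda\sigma) = \lambda^{2p} = \lambda^{-2}$, so $\lambda^2$ and $\lambda\sigma$ satisfy the defining relations of $Q_8$ inside $\Gamma L_1(F) \le \mathrm{GL}_{Z_p}(V)$.

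Each $A \in \mathrm{GL}_{Z_p}(V)$ lifts to an automorphism $\phi_A$ of $P$ by $(v,\alpha,z) \mapsto (Av, \alpha \circ A^{-1}, z)$, but all such $\phi_A$ fix $Z(P)$ pointwise, so this alone would force $C = H$. To break this, introduce the commuting involution $\tau\colon (v,\alpha,z) \mapsto (v,-\alpha,-z)$, which one checks is an automorphism of $P$, and set $i = \phi_{\lambda^2}\tau$, $j = \phi_{\lambda\sigma}$, $H = \langle i, j\rangle$. A direct calculation gives $i^2 = j^2 = \phi_{-1}$ and $jij^{-1} = i^{-1}$, so $H \cong Q_8$ acts faithfully on $P$, while $C = C_H(P') = \langle j\rangle \cong Z_4$ is precisely the set of elements with trivial $\tau$-component, so $|H:C| = 2$.

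It remains to verify the Frobenius conditions. For the action on $V$, the nontrivial scalars $\lambda^{2k}$ are fixed-point free, and a semilinear element $\lambda^k\sigma$ (with $k$ odd) would have a nonzero fixed vector only if $\lambda^{-k}$ were a $(p-1)$-th power in $F^*$, i.e., lay in the subgroup of order $p+1$; but $|\lambda^{-k}| = 8$ exceeds $(p+1)_2 = 4$. A parallel eigenvalue computation, using that each relevant $A_h$ either has order $4$ (the scalars $\lambda^{\pm 2}$) or satisfies $A_h^2 = -1$ (the semilinear elements, so their eigenvalues are primitive $4$th roots of unity), rules out $\pm 1$-eigenvalues, so the $\tau$-twist on the appropriate coset does not destroy the Frobenius action on $\hat V$ either. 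Hence $H$ acts Frobeniusly on $P/P'$, forcing $D = C_P(C) = P'$; the fully-ramified condition is automatic for extraspecial $P$ of order $p^5$; and $H/C \cong \langle \tau \rangle$ acts on $P' = Z_p$ as $-1$, Frobeniusly. All hypotheses of Lemma~\ref{facts}(2) are met, and the theorem follows. The main obstacle is arranging the $\tau$-twist so that $H$ realizes a faithful $Q_8$ with a genuine index-$2$ action on $Z(P)$: the untwisted lift from $\Gamma L_1(\mathbb{F}_{p^2})$ to $\mathrm{Aut}(P)$ would give $C = H$, and $\tau$ must be combined with the linear data so that exactly half of $H$ centralizes $P'$.
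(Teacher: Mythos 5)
Your proposal is correct and is essentially the paper's own construction: the same extraspecial $P$ of order $p^5$ built from $V\oplus\hat V\oplus Z_p$, the same $Q_8$ generated inside $\Gamma L_1(\mathbb{F}_{p^2})$ by $\lambda^2$ and $\lambda\sigma$ and twisted by an order-two automorphism inverting $Z(P)$, with $C=\langle\lambda\sigma\rangle$ of index $2$, followed by an appeal to Lemma \ref{facts}(2). Your twist $\tau\colon(v,\alpha,z)\mapsto(v,-\alpha,-z)$ differs from the paper's $\zeta\colon(v,\alpha,z)\mapsto(-v,\alpha,-z)$ only by the factor $\phi_{-1}=(\lambda\sigma)^2\in H$, so the two groups $H$ coincide and the verifications are the same in substance.
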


\begin{proof}
We somewhat follow the construction found in the proof of Theorem \ref{three}.  We take $V$ to be a vector space of dimension $2$ over $Z_p$, and we define $P$ as in the proof of Theorem \ref{three} to be the extraspecial group of order $p^5$ arising from paring $V$ with $\hat V$.  Viewing $V$ as a field of order $p^2$, it is not difficult to see that the multiplicative group will have an element of order $8$.  Let $\lambda$ be the automorphism of $V$ that is obtained by multiplication from that element, and as in the proof of Theorem \ref{three}, $\lambda$ also determines an automorphism of $\hat V$ of order $8$, and we also use $\lambda$ to denote the automorphism of $P$ given by $(a,\alpha,z) \mapsto (a^\lambda,\alpha^\lambda,z)$.  We let $\sigma$ be the Frobenius automorphism for $V$ viewed as field, and again, $\sigma$ defines an automorphism of $\hat V$, and we write $\sigma$ for the automorphism of $P$ given by $(a,\alpha,z) \mapsto (a^\sigma,\alpha^\sigma,z)$.  It is not difficult to see that $(\lambda \sigma)^2 = \lambda^{p+1}$, and since $p \equiv 3 (~{\rm modulo}~8)$, we see that $\lambda^{p+1} = \lambda^4 = -1$.

Let $\zeta$ be an element of order $2$ in the multiplicative group of $Z_p$, and observe that the map $(a,\alpha,z) \mapsto (\zeta a, \alpha, \zeta z)$ is an automorphism of order $2$ on $P$ and will centralize $\lambda$ and $\sigma$ as automorphisms of $P$.  We now take $H$ to be the subgroup of the automorphism group of $P$ given by $\langle \zeta \lambda^2, \lambda \sigma \rangle$.  It is not difficult to see that $H$ will be isomorphic to the quaternion group of order $8$.  Since $\lambda^4 = -1$, we see that $H$ acts Frobeniusly on $P/P'$.  Observe that $\lambda \sigma$ centralizes $P'$, and $\zeta \lambda^2$ does not centralize $P'$.  Since $|H:\langle \lambda\sigma \rangle| = 2$, we conclude that $C = \langle \lambda\sigma \rangle$.  Notice that $H/C$ acts Frobeniusly on $P'$.  Thus, the hypotheses of Lemma \ref{facts} are met, and we obtain that ${\rm dl} (G) = 4$ and the character degrees are as stated.  Since $H$ is nilpotent, we see that $G$ has Fitting height $2$.
\end{proof}

\end{document}